\pgfplotsset{compat = 1.18}
\newcommand\notsotiny{\@setfontsize\notsotiny\@vipt\@viipt}
\newcommand{\lra}{\longrightarrow}
\newcommand{\Tr}{\operatorname{Tr}}
\newcommand{\Om}{\Omega}
\newcommand{\R}{\mathbb{R}}
\newcommand{\eps}{\varepsilon}
\renewcommand{\div}{\operatorname{div}}
\newcommand{\Rey}{\text{Re}}
\renewcommand{\Cap}{\text{Ca}}
\newcommand{\I}{\mathbb{I}}
\newcommand{\pa}{\partial}
\newcommand\Tstrut{\rule{0pt}{2.6ex}}         
\newcommand\Bstrut{\rule[-0.9ex]{0pt}{0pt}}   
\title{Semi-implicit Eulerian method for the fluid structure interaction of elastic membranes
\thanks{Submitted to the editors 27/10/2023.\funding{This work was funded by the ANR CAPSEULERIAN-FSI.}}}
\author{Mirco Ciallella\thanks{École Nationale Supérieure d’Arts et Métiers, Institut de Mécanique et d’Ingénierie, 33400 Talence, France
  (\email{mirco.ciallella@ensam.eu}).}
\and Thomas Milcent\thanks{École Nationale Supérieure d’Arts et Métiers, Institut de Mécanique et d’Ingénierie, 33400 Talence, France
  (\email{thomas.milcent@ensam.eu}).}}
\begin{document}

\maketitle

\begin{abstract}
In this paper we propose a novel and general approach to design semi-implicit methods for the simulation of fluid-structure interaction problems in a fully Eulerian framework. 
In order to properly present the new method, we focus on the two-dimensional version of the general model developed to describe full membrane elasticity. 
The approach consists in treating the elastic source term by writing an evolution equation on the structure stress tensor, even if it is nonlinear.
Then, it is possible to show that its semi-implicit discretization allows us to add to the linear system of the Navier-Stokes equations some consistent dissipation terms that depend on the local deformation and stiffness of the membrane.
Due to the linearly implicit discretization, the approach does not need iterative solvers and can be easily applied to any Eulerian framework for fluid-structure interaction.
Its stability properties are studied by performing a Von Neumann analysis on a simplified one-dimensional model and proving that, thanks to the additional dissipation, the discretized coupled system is unconditionally stable.
Several numerical experiments are shown for two-dimensional problems by comparing the new method to the original explicit scheme and studying the effect of structure stiffness and mesh refinement on the membrane dynamics.
The newly designed scheme is able to relax the time step restrictions that affect the explicit method and reduce crucially the computational costs, especially when very stiff membranes are under consideration.
\end{abstract}

\begin{keywords}
Eulerian elasticity, Fluid-structure interaction, Semi-implicit, Immersed boundary, Incompressible Navier-Stokes
\end{keywords}

\begin{MSCcodes}
65M08, 65M12, 65M85, 76D05, 74B20 
\end{MSCcodes}

\section{Introduction}

Fluid-structure interaction problems arise in a wide range of applications and their numerical simulation is extremely challenging.
In biomechanics, the correct modeling and computation of membranes immersed in an incompressible fluid is a key point to study biological
capsules and cells, like red blood cells (RBCs). Capsules are in general modeled as a liquid drop protected by a thin elastic membrane
and their use is widely spread in food, cosmetics and pharmaceutical industries.
Simulations of such fluid-structure problems are expensive and require the correct coupling between the flow behavior and the structure behavior,
which often comes with the imposition of some kind of interface conditions. One of the most classical approaches is the so-called 
Arbitrary Lagrangian-Eulerian (ALE) method~\cite{liu1998numerical,legay2006eulerian,sahin2009arbitrary,donea1982arbitrary,fanion2000deriving}. 
This approach consists in solving the fluid-structure problem on a moving mesh that follows the displacement
of the interface, and enforce the coupling conditions right on the grid points that describe the interface.
Although this philosophy may bring accurate results for some problems, its limitations are clear when dealing with large deformations or topology changes.

In~\cite{peskin1972flow}, Peskin introduced a new approach that simplifies the fluid-structure coupling for applications with structures modeled as lower dimensional manifolds of the problem.
For immersed boundaries~\cite{peskin2002immersed,lee2003immersed,mittal2005immersed}, the coupling interface conditions reduce to a forcing term in the Navier-Stokes equations that modifies the flow behavior by taking 
into account the deformation of the immersed membrane. In this case, the structure is tracked using Lagrangian markers and the source term is spread on the fluid with a discretized Dirac delta.  
When large deformations are under consideration the need to insert or delete markers for stretching and shrinking of the membrane remains a delicate aspect, which requires the introduction of additional parameters to tune.

Both ALE and immersed boundary (IB) methods belong to the class of front-tracking methods that need an explicit definition and displacement of the interface.
Fully Eulerian models for fluid-structure interaction have been recently introduced~\cite{cottet2006level} for computational purposes as they allow to simulate quite easily multi-dimensional problems
with large deformations thanks to an implicit treatment of interfaces and deformations through level-set fields~\cite{Osher1988,Sussman1994}. 
In the last decade, Eulerian elasticity has drawn a lot of attention and its implementation turned out to be very successful for both 
incompressible~\cite{cottet2008eulerian,maitre2009applications,richter2013fully,Milcent2016,Deborde2020,bergmann2022eulerian} and compressible~\cite{gorsse2014simple,de2016cartesian,de2017cartesian} applications. 
These models rely on the so-called backward characteristics which are defined as the inverse map of the deformation, and correlates the location of a material point to its initial position.
As mentioned above, this vector is also defined with Eulerian variables that are advected by the Eulerian fluid velocity, and its spatial derivatives are used to compute the elastic stress tensor.
Rather than following the backward characteristics, it was proposed in~\cite{sugiyama2011full} an Eulerian approach based on following the 
evolution of the symmetrical left Cauchy-Green tensor through an inhomogeneous advection equation with extra terms depending 
on the velocity gradient.  In this case, six components in three dimensions (three in two dimensions) must be advected, 
rather than three in three dimensions (two in two dimensions) for the backward characteristics. 

Whether one deals with ALE, IB or fully Eulerian models for incompressible fluid-structure interaction problems, stability is a well-known issue of monolithic frameworks in which both the fluid and the solid are solved at the same time. 
For Eulerian models this was proven to be related to the velocity of elastic waves that (for stiff materials) are order of magnitude higher than that of the fluid, which leads to dramatic time step restrictions~\cite{de2016cartesian}. 
In order to overcome such limitations related to monolithic frameworks, several works have been dedicated to the derivation of implicit schemes~\cite{tu1992stability,mayo1992implicit,stockie1999analysis,newren2007enhancing}.
However, due to the strong nonlinearity of the problem, these schemes require computationally expensive and cumbersome iterative approaches.
With the goal of moving towards methods that are easier to introduce in different computational frameworks, semi-implicit or approximate implicit schemes are a very interesting and 
widespread research topic since their goal is to provide a less expensive and efficient alternative 
to standard implicit methods~\cite{tu1992stability,stockie1999analysis,fernandez2007projection,fernandez2013explicit}.
Similar interest in such schemes was also shown in the context of multi-phase flows for the computation of surface tension~\cite{hysing2006new,sussman2009stable} to avoid the time step restriction proportional to $\Delta x^{3/2}$.

In the context of fully Eulerian models for fluid-structure interaction, the attempts to relax the time step restrictions have been proposed in~\cite{ii2011implicit,cottet2016semi} with methods that seem to be tailored to the application under consideration.
In particular, the first reference~\cite{ii2011implicit} addresses the simulation of solid bulks immersed in an incompressible fluid by introducing a linear expansion for the nonlinear elastic stress with a Jacobian tensor, 
to improve the performances of the scheme initially proposed in~\cite{sugiyama2011full}.
Whereas in~\cite{cottet2016semi} a semi-implicit approach for thin membranes subjected to area variation is developed by 
solving a diffusion equation to predict the interface position, in the context of the model proposed in~\cite{cottet2006level}.
In this work, we aim at proposing a general way of deriving semi-implicit schemes for Eulerian models for fluid-structure interaction 
that can be directly applicable to general (nonlinear) elastic models without any need of linearization.
This approach consists in discretizing the evolution equation of the stress tensor in a semi-implicit way, and writing the value of the stress at the new time level as a function of that at the previous time plus some additional terms. 
In order to show the generality and potential of this approach, we applied it to the two-dimensional version of the full membrane model introduced in~\cite{Milcent2016}, which is capable of dealing with both area and shear variations.
While the level-set approach proposed in~\cite{cottet2016semi} only works for the elastic force that models the area variation 
introduced in~\cite{cottet2006level}, our idea can be ideally applied to more general and nonlinear elastic models, because it depends on the deformation vector. 
Indeed, the model treated in this paper is the starting point to develop semi-implicit schemes for full membrane elasticity that 
includes both area and shear variations~\cite{Milcent2016}. The complete three-dimensional analysis will be tackled in a future work.

The paper is organized as follows. In Section~\ref{sec:EulerianModel}, we recall the basic definitions to derive the fully Eulerian model for fluid-structure interaction focusing on backward characteristics, level-set
and the membrane elastic model treated herein. 
In Section~\ref{sec:stability} we recall and summarize the main stability results obtained for a simplified one-dimensional model, and derive our semi-implicit scheme in this framework. 
Here it is shown that the new scheme consists in adding a consistent dissipation to the system which allows us to prove its unconditional stability. 
In Section~\ref{sec:schemes}, we begin by describing the classical explicit coupling of the elastic source term and the general algorithm to solve the full model. 
Here, we also derive an evolution equation on the stress tensor that will be then used to design the semi-implicit approach.
After that, we focus on the derivation of the semi-implicit coupling which is performed by introducing the discretized evolution equation of the stress in the momentum equation. 
Here we show that also the approach for the full model consists in adding a diffusion that depends on stiffness and area variation.  
In Section~\ref{sec:manufsol}, we test the convergence properties of the discretized semi-implicit operators to show that they are second-order accurate in space.
In Section~\ref{sec:simulations}, we provide numerical illustrations on the ability of the newly developed semi-implicit scheme to introduce an important relaxation on the time step restriction to simulate a membrane immersed in a simple shear flow. 
In particular, we focus on the influence of membrane stiffness and mesh refinement on the computational gain and quality of the solution.
In Section~\ref{sec:conclusions}, we conclude with a summary of the work and give some perspectives on aspects that need further investigation.
Finally, in Appendix~\ref{app:A} we give a proof on the parameter chosen to measure the area variation, 
in Appendix~\ref{app:B} we give the proof of the evolution equation of the stress tensor, and 
in Appendix~\ref{app:C} we provide examples on how to discretize the semi-implicit operators on Cartesian staggered meshes.

\section{Fully Eulerian Model}
\label{sec:EulerianModel}

\subsection{Forward and backward characteristics}

Let $\Om_0\subset \R^2$ be the reference configuration of a continuous medium and assume that this medium is deformed by a smooth map $X:\R^{+}\times \R^{2}\lra \R^2$ (the forward characteristics) to $\Om_t = X(t,\Om_0)$. A velocity field $u:\R^2\times\R^{+}\lra \R^2$ is naturally associated with $X$:
\begin{equation}
\partial_tX(t,\xi) = u (X(t,\xi),t),  \qquad   X(0,\xi)=\xi, \qquad \xi\in\Om_0.
\label{carac_lag}
\end{equation}
We introduce the backward characteristics $Y:\R^2\times \R^{+}\lra \R^2$ by the formula $Y(X(t,\xi),t) = \xi$. The physical interpretation of $Y(x,t)$ is the position at time $0$ of a material particle lying in $x$ at time $t$ and moving at speed $u$. The derivative of this relation with respect to $t$ and $\xi$ in turn gives with (\ref{carac_lag})
\begin{equation}
\partial_tY + (u\cdot \nabla_x) Y = 0,  \qquad  Y(x,0)=x, \qquad x\in\Om_t,
\label{transportY}
\end{equation}
and
\begin{equation}
[\nabla_{\xi}X(t,\xi)] = [\nabla_x Y(x,t)]^{-1}, \quad\text{for }x=X(t,\xi).
\label{grad_XY}
\end{equation}
The relation (\ref{transportY}) is the Eulerian equivalent of the characteristic equation (\ref{carac_lag}). In addition, equation (\ref{grad_XY}) allows to compute the gradient of the deformation in the Eulerian frame via $Y$. 
The next sections are devoted to the description of the mathematical model used to treat the membrane elastic deformations with the backward characteristics $Y$.
To simplify the notation, we drop the subscript of the operator $\nabla_x$.
\begin{figure}
\centering
\tikzset{>=latex}
\begin{tikzpicture}[scale=1.0]
\pgfmathsetseed{1}
\draw (-0.1,2) node[left] {$Y(x,t)=\xi$};
\fill [black] (0,2) circle (2pt);
\draw (5.7,2.5) node[right] {$x=X(t,\xi)$};
\fill [black] (5.5,2.5) circle (2pt);
\draw[->,thick] (5.5,2.5) -- (4.8,3.5);
\draw (4.8,3.7) node {$n(x,t)$};
\draw[->,thick,dashed] (0,2) to[bend left] (5.5,2.5);
\draw[->,thick,dashed] (5.5,2.5) to[bend left] (0,2);
\draw (-2,-1.5) node[below right] {Initial configuration $\Omega_0$};
\draw (5,-1.5) node[below right] {Deformed configuration $\Omega_t$};
\draw[very thick] (0,1.4) circle (60pt);
\draw[very thick] (8,0) .. controls +(0.8,1.2)   and +(0.7,-0.7) ..
(7,3.5)
                         .. controls +(-0.9,0.6) and +(0.5,0.6) .. (5,3)
                         .. controls +(-0.6,-0.6)  and +(-0.4,0.6) ..
(5.5,0)
                         .. controls +(0.4,-0.5)  and +(-0.7,-1.3) ..
(8,0) -- cycle;
\end{tikzpicture}
\caption{Forward and backward characteristics.}\label{fig:characteristics}
\end{figure}
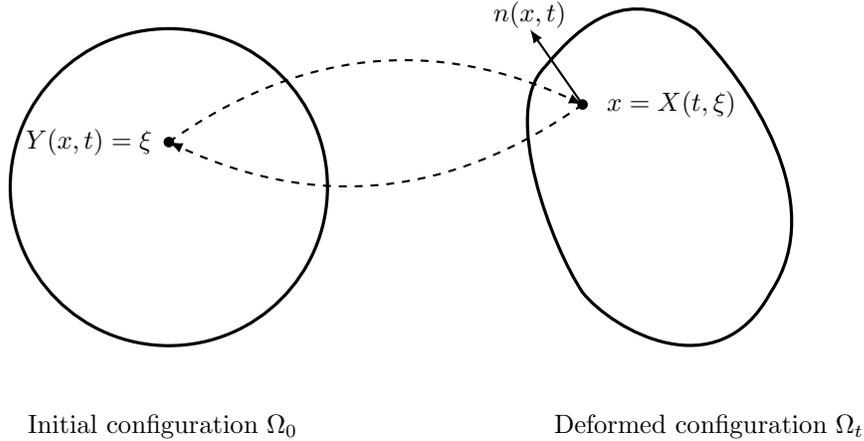
\subsection{Hyperelastic membrane models}

The notations and results summarized in this section are detailed in~\cite{Milcent2016} for the general three-dimensional model. 
We consider a surface $\Gamma_t =\{ x \in \R^2 \; / \; \phi(x,t)=0 \}$ captured by a level set function $\phi:\R^2\times \R^{+}\lra \R$ and advected by the Eulerian velocity field $u$:
\begin{equation}
\partial_t \phi + u\cdot \nabla \phi = 0.
\label{advphi}
\end{equation}
The normal $n(x,t)$ for  $x\in\Gamma_t$ is then expressed in terms of the normalized gradient of the level set:
\begin{equation}
n(x,t) = \frac{\nabla \phi(x,t)}{|\nabla \phi(x,t)|}.
\label{normales}
\end{equation}
To measure the deformations on the surface $\Gamma_t$ we introduce the tensor
\begin{equation}
 \mathcal{A} = B - \frac{(Bn)\otimes(Bn)}{(Bn)\cdot n}.
\label{AB}
\end{equation}
where $B=[\nabla Y]^{-1}[\nabla Y]^{-T}$ is the left Cauchy-Green tensor. Tensor $\mathcal{A}$ measures the surface deformations by projecting the deformations (measured by $B$) on the surface $\Gamma_t$ (represented locally by $n$). The vector $n$ is an eigenvector of $\mathcal{A}$ associated to the eigenvalue $0$ so $\det(\mathcal{A})=0$. In two dimensions, the other invariant (the trace) is used to define 
\begin{equation}
Z = \sqrt{\Tr(\mathcal{A}) }.
\label{Z1_Z2b}
\end{equation}
Following the same approach in~\cite{Milcent2016}, it is possible to prove that $Z$ measures the local area variation for two-dimensional problems (see Appendix~\ref{app:A} for the proof). 
\begin{remark}
Using the backward characteristics $Y$, rather than only the level-set $\phi$ as in~\cite{cottet2006level}, allows us to have the 
deformations that can also be used to measure the shear variation for tree-dimensional membranes. 
Indeed, the transport equation for the level-set function only records information on the surface area variations, and
ignores any tangential component of $u$. This is due to the fact that $\nabla\phi$ is normal to the interface.
\end{remark}
In order to compute the elastic force, we then  introduce the regularized membrane energy,
\begin{equation}
\mathcal{E} = \int_{Q} E(Z)\frac{1}{\eps}\zeta\left(\frac{\phi}{\eps}\right)\;{\rm d}x.
\end{equation}
Here $Q$ is a box containing the membrane, $E(Z)$ is the constitutive law, $\varepsilon$ the width of the interface and $\zeta$ is a cut-off function used to spread the interface near $\{ \phi=0\}$. 
The associated force, computed with the principle of virtual power, is given by
\begin{equation}
F = \div\left(\sigma \right) \delta_\eps(\phi),\quad \text{where} \quad \sigma = E'(Z) Z (\I - n\otimes n) 
\label{eqFm}
\end{equation}
where 
$$\delta_\eps(\phi) = \frac{1}{\eps}\zeta\left(\frac{\phi}{\eps}\right)$$ 
is introduced to simplify the notation.\\
The Evan-Skalak constitutive law is used in this article:
\begin{equation}
	E'(Z) = K (Z-1),
    \label{eq:law_evan_skalak}
\end{equation}
where $K$ is the elastic modulus of the membrane.\\

\subsection{Fluid-structure interaction model}

The elastic membrane is immersed in a incompressible fluid modeled by the Navier-Stokes equations. The overall fully Eulerian model is given by
\begin{equation}
\left\{
\begin{aligned}
\rho ( \pa_t u + (u\cdot\nabla) u ) + \nabla p -\div(2\mu D(u)) &= F(\phi,Y), \\
							\div(u) &= 0,         \\
				\pa_t \phi + u\cdot \nabla \phi &= 0,         \\
				    \pa_t Y + (u\cdot \nabla) Y &= 0, 
\label{eq_tot}
\end{aligned}
\right.
\end{equation}
where $u$ is the fluid velocity, $D(u) = ([\nabla u] +[\nabla u]^T)/2$ the strain rate tensor, $\rho$ the density, $p$ the pressure and $\mu$ the viscosity. 
Different values of density and viscosity, outside (1) and inside (2) the membrane, are taken into account by defining 
$$ \rho = \mathcal H\left(\frac\phi\eps\right)\rho_1 + \left(1- \mathcal H\left(\frac\phi\eps\right) \right) \rho_2, \quad \mu = \mathcal H\left(\frac\phi\eps\right)\mu_1 + \left(1- \mathcal H\left(\frac\phi\eps\right) \right) \mu_2,$$
where $\mathcal H$ represents a smooth Heaviside function.
The membrane force $F$ is given by \eqref{eqFm}. 
These equations are completed with appropriate initial and boundary conditions that will be detailed in the numerical validation section.\\

The fluid-structure problem is governed by two parameters, the Reynolds number $\Rey$ and the capillary number $\Cap$, 
that can be recovered by rewriting the momentum equation in a non-dimensional form. 
Therefore, we introduce the reference variables $\varphi^\ast$ and the non-dimensional variables $\bar\varphi$, which give the following operators and physical variables:
$$ \rho = \rho^\ast \,\bar\rho, \quad u = u^\ast \,\bar u,\quad p = p^\ast \,\bar p,\quad \mu = \mu^\ast \,\bar \mu, \quad F = \frac{K^\ast}{(L^\ast)^2}\,\bar F, \quad \nabla = \frac{\bar\nabla}{L^\ast}. $$
By replacing all variables with their counterpart, and taking $p^\ast=\mu^\ast u^\ast/L^\ast$, the non-dimensional momentum equation reads,
\begin{equation}\label{eq:momnondim}
\Rey \left[ \bar\rho \left(  \pa_t \bar u + (\bar u \cdot\bar\nabla) \bar u \right) \right]  + \bar\nabla\bar p - \div(\bar\mu ([\bar\nabla \bar u] +[\bar\nabla \bar u]^T)) = \frac{1}{\Cap}\bar F,
\end{equation}
where $\Rey=\rho^\ast u^\ast L^\ast/\mu^\ast$, and $\Cap = \mu^\ast u^\ast / K^\ast$.

\subsection{Reinitialization and extrapolation}
It is a well-known problem that the level-set field $\phi$ stops being a signed distance function after its time evolution with the associated advection equation.
In this work, for its reinitialization, we follow the idea proposed in~\cite{Sussman1994}  and solve for a fictitious time $\tau$
\begin{equation}\label{eq:levelsethamilton}
\partial_\tau \phi + \text{sgn}(\phi_{0})(|\nabla\phi|-1)=0.
\end{equation}
Moreover, as discussed in~\cite{Deborde2020,bergmann2022eulerian} also the backward characteristics $Y$ can get extremely distorted
with time. Being the elastic force computed by means of $Y$ and $\phi$, if not treated, these distortions are likely to cause spurious disturbances 
close to the fluid-structure interface.\\
To summarize previous works on this topic, we perform 
%
%
the linear extrapolation proposed in~\cite{Aslam2003} for the outer characteristics, i.e.\ $\phi>0$,
\begin{align}\label{eq:aslam}
&\partial_\tau Y_n + \mathcal H\left(\frac\phi\eps\right) (n\cdot Y_n) = 0, \quad\text{with}\quad Y_n = n\cdot \nabla Y, \\
&\partial_\tau Y   + \mathcal H\left(\frac\phi\eps\right) (n\cdot Y - Y_n) = 0.
\end{align}

\section{Stability analysis for a simplified one-dimensional model}\label{sec:stability}

In this section, we recall recent stability results in the context of explicit and implicit coupling for a simplified model,
and we introduce the new semi-implicit approach in this framework. 
We consider here the same linearization and assumptions taken for similar models~\cite{bost2009linear,fondaneche2021interaction}. 
To summarize we set $\rho=1$ and we decompose the solution as the small perturbation of a stationary solution $(u_0,Y_0)=(0,x)$.\\
By avoiding the treatment of nonlinear terms we end up with the following linearized one-dimensional model:
\begin{equation}
\left\{
\begin{aligned}
&\frac{\partial u}{\partial t} - \mu \frac{\partial^2 u}{\partial x^2} = -\frac{K}{\eps}\frac{\partial^2 Y}{\partial x^2},\\[5pt]
&\frac{\partial Y}{\partial t} + u = 0.
\end{aligned}
\right.
\end{equation}
Here we recall the stability conditions for such models when using a classical explicit coupling similarly to what was introduced in~\cite{bost2009linear}. 
For simplicity we consider a classical centered discretization of second order derivatives, with grid size $\Delta x$, with an implicit treatment for the viscous term:
\begin{equation}
\left\{
\begin{aligned}
&\frac{u_j^{n+1}-u_j^n}{\Delta t} - \mu \frac{u_{j+1}^{n+1}-2u_j^{n+1}+u_{j-1}^{n+1}}{(\Delta x)^2} = -\frac{K}{\eps}\frac{Y_{j+1}^{n}-2 Y_j^{n}+Y_{j-1}^{n}}{(\Delta x)^2}, \\[5pt]
&\frac{Y_j^{n+1}-Y_j^n}{\Delta t} + u_j^{n+1} = 0.
\end{aligned}
\right.
\end{equation}
For this system, the stability condition was proven in~\cite{bost2009linear} to be 
\begin{equation}
\Delta t < \frac{\mu \eps + \max(\mu\eps,\sqrt{K \eps}\Delta x)}{K}.
\end{equation}
Instead, when treating implicitly the elastic term, the discretized system reads:
\begin{equation}
\left\{
\begin{aligned}
&\frac{u_j^{n+1}-u_j^n}{\Delta t} - \mu \frac{u_{j+1}^{n+1}-2u_j^{n+1}+u_{j-1}^{n+1}}{(\Delta x)^2} = -\frac{K}{\eps}\frac{Y_{j+1}^{n+1}-2 Y_j^{n+1}+Y_{j-1}^{n+1}}{(\Delta x)^2}, \\[5pt]
&\frac{Y_j^{n+1}-Y_j^n}{\Delta t} + u_j^{n+1} = 0.
\end{aligned}
\right.
\end{equation}
Once again in~\cite{bost2009linear}, this system was proven to be unconditionally stable. 
However, for real applications, this scheme is very cumbersome to design and implement 
due to the full fluid-structure Navier-Stokes system with nonlinear source term and advection equations.\\
For this reason, following the same motivation of~\cite{cottet2016semi}, we introduce and study the stability analysis of the new semi-implicit scheme in this simplified framework. 
The new approach developed for the full nonlinear model is described in Section~\ref{sec:semiimplicit}. 

We start by introducing an auxiliary variable $\tilde Y$ for the implicit elastic term,
\begin{equation}\label{eq:SIlinearmodel0}
\left\{
\begin{aligned}
&\frac{u_j^{n+1}-u_j^n}{\Delta t} - \mu \frac{u_{j+1}^{n+1}-2u_j^{n+1}+u_{j-1}^{n+1}}{(\Delta x)^2} = -\frac{K}{\eps}\frac{\tilde Y_{j+1}^{n+1}-2 \tilde Y_j^{n+1}+ \tilde Y_{j-1}^{n+1}}{(\Delta x)^2},\\[5pt]
&\frac{Y_j^{n+1}-Y_j^n}{\Delta t} + u_j^{n+1} = 0.
\end{aligned}
\right.
\end{equation}

For the simplified model, writing an evolution equation on the stress tensor, discretized in a semi-implicit manner, entails that~$\tilde Y^{n+1}_j=Y_j^n-\Delta t u_j^{n+1}$. 
This gives us an explicit part and an implicit one that depends on the second order derivatives of $u$, 
$$ \frac{\tilde Y_{j+1}^{n+1}-2 \tilde Y_j^{n+1}+ \tilde Y_{j-1}^{n+1}}{(\Delta x)^2} = \frac{Y_{j+1}^{n}-2 Y_j^{n}+Y_{j-1}^{n}}{(\Delta x)^2} - \Delta t \frac{u_{j+1}^{n+1}-2u_j^{n+1}+u_{j-1}^{n+1}}{(\Delta x)^2}.$$

When introducing the previous relation in System~\eqref{eq:SIlinearmodel0}, the overall system reads

\begin{equation}\label{eq:SIlinearmodel}
\left\{
\begin{aligned}
&\frac{u_j^{n+1}-u_j^n}{\Delta t} - \left(\mu + \Delta t \frac{K}{\eps}\right) \frac{u_{j+1}^{n+1}-2u_j^{n+1}+u_{j-1}^{n+1}}{(\Delta x)^2} = -\frac{K}{\eps}\frac{Y_{j+1}^{n}-2 Y_j^{n}+Y_{j-1}^{n}}{(\Delta x)^2}, \\[5pt]
&\frac{Y_j^{n+1}-Y_j^n}{\Delta t} + u_j^{n+1} = 0.
\end{aligned}
\right.
\end{equation}

It should be noticed that this approach reduces to solving a PDE similar to the one arising from the explicit discretization, 
but with an additional consistent viscosity term $\Delta t\frac{K}{\eps}$ that depends on the time step and the elastic parameter of the structure.

\begin{proposition}\label{prop:stabilitySI}
The semi-implicit scheme~\eqref{eq:SIlinearmodel} is unconditionally stable.
\end{proposition}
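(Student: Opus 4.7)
My plan is to carry out a standard Von Neumann (Fourier) stability analysis of the two-step coupled system \eqref{eq:SIlinearmodel}. Since the scheme has constant coefficients and both equations are linear, I will insert the ansatz $u_j^n = \hat u^n e^{ikj\Delta x}$, $Y_j^n = \hat Y^n e^{ikj\Delta x}$ into the scheme. The discrete second-derivative symbol $-\alpha := -4\sin^2(k\Delta x/2)/(\Delta x)^2 \le 0$ absorbs the spatial dependence, so everything reduces to a $2\times 2$ recurrence for $(\hat u^n,\hat Y^n)$. Setting $\beta=\mu\alpha$ and $\gamma=(K/\eps)\alpha$, the momentum update becomes $(1+\Delta t\beta+\Delta t^2\gamma)\hat u^{n+1}=\hat u^n+\Delta t\gamma\hat Y^n$, and the $Y$-update remains $\hat Y^{n+1}=\hat Y^n-\Delta t\,\hat u^{n+1}$. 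The goal is then to show that the spectral radius of the resulting amplification matrix $M$ is bounded by $1$ for every wavenumber $k$ and every $\Delta t>0$.

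Eliminating $\hat u^{n+1}$, I will write $M=D^{-1}A$ with $D:=1+\Delta t\beta+\Delta t^2\gamma\ge 1$ and
\begin{equation*}
A=\begin{pmatrix} 1 & \Delta t\gamma \\ -\Delta t & 1+\Delta t\beta \end{pmatrix}.
\end{equation*}
The eigenvalues of $A$ satisfy $\lambda^2-(2+\Delta t\beta)\lambda+D=0$, so eigenvalues of $M$ are $\lambda/D$. Stability therefore reduces to proving $|\lambda|\le D$ for both roots of this quadratic. I expect this to split naturally into two cases according to the sign of the discriminant $\Delta:=(\Delta t\beta)^2-4\Delta t^2\gamma$.

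In the complex case ($\Delta\le 0$), a direct computation gives $|\lambda|^2=D$, hence $|\lambda/D|=1/\sqrt{D}\le 1$, which is exactly the dissipation provided by the extra term $\Delta t\,K/\eps$. In the real case ($\Delta>0$), Vieta gives $\lambda_+\lambda_-=D>0$ and $\lambda_++\lambda_-=2+\Delta t\beta>0$, so both roots are positive, and it suffices to bound the larger one. The inequality $\lambda_+\le D$ rearranges, after squaring the nonnegative right-hand side, to $-4\Delta t^2\gamma\le 4\Delta t^3\beta\gamma+4\Delta t^4\gamma^2$, which holds trivially because $\beta,\gamma\ge 0$. Combining both cases yields $\rho(M)\le 1$ for all $k$ and all $\Delta t$, proving unconditional stability.

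The only subtle step is the algebraic bound $\lambda_+\le D$ in the real case; everything else is routine symbol-pushing. No restriction on the ratio $\Delta t\,K/(\eps(\Delta x)^2)$ appears, which is the quantitative reflection of the consistent dissipation added by the semi-implicit treatment of the stress.
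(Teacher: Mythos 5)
Your proof is correct and follows essentially the same route as the paper: a Von Neumann analysis reducing to a $2\times2$ amplification matrix with characteristic polynomial $D\lambda^2-(2+\Delta t\beta)\lambda+1=0$ (identical to the paper's after identifying $D=\alpha_\theta$ and $\Delta t^2\gamma=\Delta t\beta_\theta$), treating the complex-root case via the product of roots $1/D\le 1$ and the real-root case by bounding the larger root. The only cosmetic difference is that you establish $\lambda_+\le D$ by squaring the inequality directly, whereas the paper bounds the root by dropping the $-\Delta t\beta_\theta$ term from the trace; both are valid.
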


\begin{proof}
With the assumption of periodic boundary conditions we perform a Von-Neumann stability analysis.
Therefore, we decompose the solution $(u,Y)$ as Fourier expansions of the grid values,
{\small
\begin{equation*}
u_j^n = \hat u^n(k)\text{e}^{2i\pi kj\Delta x}, Y_j^n = \hat Y^n(k)\text{e}^{2i\pi kj\Delta x}\quad \xrightarrow{\theta:=2\pi k \Delta x}\quad  u_j^n = \hat u^n(\theta)\text{e}^{ij\theta}, Y_j^n = \hat Y^n(\theta)\text{e}^{ij\theta}.
\end{equation*}
}
We then obtain from~\eqref{eq:SIlinearmodel} 
{\small
\begin{equation*}
\left\{
\begin{aligned}
&\hat u^{n+1}(\theta)\left[1 + 4\frac{\Delta t}{(\Delta x)^2}\left(\mu + \Delta t \frac{K}{\eps}\right)\sin^2\left(\frac\theta2\right)\right] = \hat u^n(\theta) + \hat Y^n(\theta) \left[4\frac{\Delta t}{(\Delta x)^2}\frac{K}{\eps}\sin^2\left(\frac\theta2\right)\right],\\[5pt]
&\Delta t \hat u^{n+1}(\theta) + \hat Y^{n+1}(\theta) = \hat Y^{n}(\theta). 
\end{aligned}
\right.
\end{equation*}
}
By setting 
$$\alpha_\theta = 1 + 4\frac{\Delta t}{(\Delta x)^2}\left(\mu + \Delta t \frac{K}{\eps}\right)\sin^2\left(\frac\theta2\right), \quad \beta_\theta = 4\frac{\Delta t}{(\Delta x)^2}\frac{K}{\eps}\sin^2\left(\frac\theta2\right), $$
we can recast the system in matrix form 
$$ \underbrace{\begin{pmatrix} \alpha_\theta & 0 \\ \Delta t & 1 \end{pmatrix}}_{A_\theta} \begin{pmatrix} u^{n+1}(\theta) \\ \hat Y^{n+1}(\theta) \end{pmatrix} = \underbrace{\begin{pmatrix} 1 & \beta_\theta \\ 0 & 1 \end{pmatrix}}_{B_\theta} \begin{pmatrix} u^{n}(\theta) \\ \hat Y^{n}(\theta) \end{pmatrix}. $$
The eigenvalues $\lambda_1$ and $\lambda_2$ are the solution of the characteristic polynomial \hfill\break 
${\det(A_\theta^{-1}B_\theta-\lambda\I)=0}$, which is
$$ \det \begin{pmatrix} \frac{1}{\alpha_\theta}-\lambda & \frac{\beta_\theta}{\alpha_\theta} \\ -\frac{\Delta t}{\alpha_\theta} & 1-\Delta t\frac{\beta_\theta}{\alpha_\theta}-\lambda \end{pmatrix} = \alpha_\theta\lambda^2 - \lambda(1+\alpha_\theta-\Delta t\beta_\theta ) + 1 = 0,$$
whose invariants are 
$$ \lambda_1\lambda_2 = \frac1{\alpha_\theta}<1,\quad \lambda_1 + \lambda_2 = \frac{1+\alpha_\theta-\Delta t\beta_\theta}{\alpha_\theta}>0.$$
If $(\lambda_1 + \lambda_2)^2<4\lambda_1\lambda_2$, the eigenvalues are complex conjugates, and their modulus is always less than 1.
If $(\lambda_1 + \lambda_2)^2>4\lambda_1\lambda_2$, the eigenvalues are real and always positive. In particular, 
\begin{align*}
\lambda_1 &= \frac{1+\alpha_\theta-\Delta t\beta_\theta+\sqrt{(1+\alpha_\theta-\Delta t\beta_\theta)^2-4\alpha_\theta}}{2\alpha_\theta} \\
          &\leq \frac{1+\alpha_\theta+\sqrt{(1+\alpha_\theta)^2-4\alpha_\theta}}{2\alpha_\theta} = 1, 
\end{align*}
which proves that the spectral radius of $A_\theta^{-1}B_\theta$ is always less than one.
\end{proof}

\begin{remark}[Stability of the fully nonlinear semi-implicit schemes]
Observe that the above stability condition has been proven for a linear simplified one-dimensional model and it cannot be generally applied 
to the fully nonlinear fluid-structure problem. In particular, for realistic applications, a relaxation of the time step restriction 
is experienced thanks to the additional dissipation added depending on the local deformation of the media.
\end{remark}
\section{Numerical schemes}\label{sec:schemes}
In this section we present how to develop both explicit and semi-implicit couplings for the full nonlinear model.
\subsection{Explicit method}

In general, The equations (\ref{eq_tot}) are discretized with finite volume schemes on a staggered grid (see Figure~\ref{fig:staggered} for a two-dimensional configuration). 
It should be noticed that the force term $F(\phi,Y)$ is here written as the divergence of a stress tensor multiplied by a cut-off function.

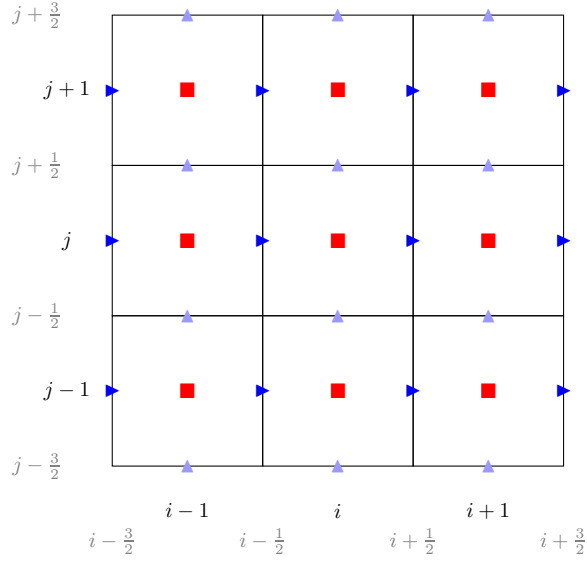
\begin{figure}
\centering
\begin{tikzpicture}[scale=2.0]
    \foreach \x in {0,...,2}
    {
	\foreach \y in {0,...,2}
	{
	    \draw (\x, \y) rectangle (\x+1, \y+1);
	    \node[red,scale=0.8] at (\x+0.5, \y+0.5) {${\color{red}\blacksquare}$};
	}
    }

    \foreach \x in {0,...,2}
    {
	\foreach \y in {0,...,2}
	{
	    \node[scale=0.8] at (\x+0.5, \y) {${\color{blue!40}\blacktriangle}$};
	    \node[scale=0.8] at (\x, \y+0.5) {${\color{blue}\blacktriangleright}$};
	}
    }

    \foreach \x in {3,...,3}
    {
	\foreach \y in {0,...,2}
	{
	    \node[scale=0.8] at (\x, \y+0.5) {${\color{blue}\blacktriangleright}$};
	}
    }

    \foreach \y in {3,...,3}
    {
	\foreach \x in {0,...,2}
	{
	    \node[scale=0.8] at (\x+0.5, \y) {${\color{blue!40}\blacktriangle}$};
	}
    }

    \node[scale=0.8] at ( 0.5, -0.3) {$i-1$};
    \node[scale=0.8] at ( 1.5, -0.3) {$i  $};
    \node[scale=0.8] at ( 2.5, -0.3) {$i+1$};

    \node[scale=0.8] at ( 0.0, -0.5) {${\color{black!50}i-\frac32}$};
    \node[scale=0.8] at ( 1.0, -0.5) {${\color{black!50}i-\frac12}$};
    \node[scale=0.8] at ( 2.0, -0.5) {${\color{black!50}i+\frac12}$};
    \node[scale=0.8] at ( 3.0, -0.5) {${\color{black!50}i+\frac32}$};

    \node[scale=0.8] at ( -0.3, 0.5) {$j-1$};
    \node[scale=0.8] at ( -0.3, 1.5) {$j  $};
    \node[scale=0.8] at ( -0.3, 2.5) {$j+1$};

    \node[scale=0.8] at ( -0.5, 0.0) {${\color{black!50}j-\frac32}$};
    \node[scale=0.8] at ( -0.5, 1.0) {${\color{black!50}j-\frac12}$};
    \node[scale=0.8] at ( -0.5, 2.0) {${\color{black!50}j+\frac12}$};
    \node[scale=0.8] at ( -0.5, 3.0) {${\color{black!50}j+\frac32}$};
\end{tikzpicture}
\caption{Staggered grid with position of unknowns. ${\color{red}\blacksquare}$: scalar field ($p$, $\rho$, $Y$, $\phi$). ${\color{blue}\blacktriangleright}$: vector field -- component along $x$. ${\color{blue!40}\blacktriangle}$: vector field -- component along $y$.}\label{fig:staggered}
\end{figure}

Let $\Delta t$ be the time step and $u^n$, $p^n$, $\phi^n$, $Y^n$, $\rho^n$, $\mu^n$ the time discretization of the variables at $t_n=n\Delta t$. 
The discretization is given by the projection method (see \cite{Guermond2006} for an overview on projection methods) for the Navier Stokes equations and an explicit scheme for the advection equations for the interface and backward characteristics:
%
%
\begin{align}
\text{Step 1}:\quad&\rho^n \left(\frac{u^{\star}-u^{n}}{\Delta t} + \div(u^n\otimes u^{\star})\right) + \nabla p^n \label{eq:step1}\\&\qquad\qquad\qquad\qquad\qquad -\div(2\mu^n D(u^{\star}) ) = \delta_\eps(\phi^n) \div(\sigma^n), \nonumber\\
\text{Step 2}:\quad&\div\left(\frac{\Delta t}{\rho^n} \nabla\psi^{n+1}\right) = \div(u^{\star}), \label{eq:step2}\\
\text{Step 3}:\quad&u^{n+1} = u^{\star} - \frac{\Delta t}{\rho^n}\nabla \psi^{n+1},  \qquad  p^{n+1} = p^{n} + \psi^{n+1},  \label{eq:step3} \\
\text{Step 4}:\quad&\frac{\phi^{n+1}-\phi^n}{\Delta t} + u^{n+1}\cdot \nabla \phi^n = 0, \quad \frac{Y^{n+1}-Y^n}{\Delta t} + (u^{n+1}\cdot \nabla) Y^n = 0. \label{eq:step4}
\end{align}
%
%
%
%
In Step 1~\eqref{eq:step1} a prediction of the velocity $u^{\star}$ is computed with an Euler implicit scheme for the viscous term, and an explicit treatment of the elastic source term $\sigma^n$. 
The convection term is treated following a semi-implicit approach, with a standard second-order discretization. 
In Step 2~\eqref{eq:step2}, the Poisson equation for the pressure increment $\psi^{n+1}$ is solved with appropriate boundary conditions~\cite{Guermond2006}. 
The resulting linear systems are solved with the GMRES algorithm of the HYPRE library~\cite{Hypre2002,Hypre2006} with preconditioning. 
In Step 3~\eqref{eq:step3} the velocity is corrected to enforce the incompressibility condition and the pressure is updated. 
In Step 4~\eqref{eq:step4} the transport equations are discretized with an explicit Runge-Kutta3 scheme in time and a WENO5~\cite{shu1996weno} scheme in space for $\phi$ and $Y$. 

For the cut-off function, we considered the following expression:
\begin{equation}
\zeta(r)=
\begin{cases}
 \frac{1}{2}(1+{\rm cos}(\pi r)),\quad \text{on}\quad  [-1,1] \\
 0 ,\quad \text{elsewhere}. 
\end{cases}
\end{equation}
In our simulations $\eps$ is fixed at $2 \Delta x$ which is the standard value used in the literature to spread the interface.

\subsection{Semi-implicit method}\label{sec:semiimplicit}

In the context of fluid-structure interaction problems, a classical explicit treatment of the elastic source term introduces very strict 
time step restrictions, especially when the structure is stiff.
This restriction leads to long and cumbersome simulations, particularly when working with incompressible flows for which long times are 
needed to simulate realistic configurations, and the speed of elastic waves is much faster than the fluid speed.

For such problems, a good approximation of the time step is given by the surface tension condition introduced in~\cite{brackbill1992continuum},
which limits the time step to $\sim\sqrt{\rho\Delta x^3/K}$. 
Although capturing properly all elastic waves becomes crucial for compressible materials in which transients are under study~\cite{de2016cartesian}, in incompressible
simulations one is more interested in the long-time dynamics of deformable structures.
For instance when dealing with very stiff structures, the admissible time step will be extremely low even if the structure will not deform that much. 

For this reason here we present the new semi-implicit method for thin elastic membranes, following the approach introduced in Section~\ref{sec:stability} 
for which it was possible to prove unconditional stability in a simplified framework.
The general idea consists in writing the evolution equation of the stress tensor, 
and then discretizing it in a semi-implicit way with respect to the Eulerian velocity $u$. 
Therefore, we introduce here an evolution equation for the nonlinear membrane stress tensor $\sigma$ defined in~\eqref{eqFm}.
\begin{proposition}\label{prop:evolstress}
Under the smoothness assumption made on $u$, the membrane stress tensor $\sigma$ verifies
\begin{align}\label{eq:evolutionSigmaM}
\pa_t \sigma + &u\cdot\nabla\sigma = \left(f'(Z) Z [\nabla u]:\mathcal{C}\right)\mathcal{C} + \nonumber \\ 
     &f(Z)\left( [\nabla u]^T(n\otimes n) + (n\otimes n)[\nabla u] - 2([\nabla u]n\cdot n)(n\otimes n) \right).
\end{align}
where we defined $f(Z)=E'(Z)Z$, and $\mathcal C = \I - n\otimes n$.
\end{proposition}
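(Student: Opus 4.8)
The plan is to take the material derivative $D_t := \partial_t + u\cdot\nabla$ of the identity $\sigma = f(Z)\,\mathcal C$, apply the product rule to obtain $D_t\sigma = f'(Z)(D_tZ)\,\mathcal C + f(Z)\,D_t\mathcal C$, and match the two pieces with the two lines of \eqref{eq:evolutionSigmaM}. This rests on three computations: the transport of the unit normal $n$, the transport of the left Cauchy--Green tensor $B$, and a two-dimensional algebraic identity relating $\mathcal A$ and $\mathcal C$.

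First I would differentiate the advection equations in space. Taking the gradient of \eqref{advphi} gives $D_t(\nabla\phi) = -[\nabla u]^T\nabla\phi$, and normalising yields $D_t n = -[\nabla u]^T n + s\,n$ with $s := ([\nabla u]n)\cdot n$. Using the identities $([\nabla u]^T n)\otimes n = [\nabla u]^T(n\otimes n)$ and $n\otimes([\nabla u]^T n) = (n\otimes n)[\nabla u]$, this gives
\[
D_t\mathcal C = -D_t(n\otimes n) = [\nabla u]^T(n\otimes n) + (n\otimes n)[\nabla u] - 2\,([\nabla u]n\cdot n)\,(n\otimes n),
\]
which is precisely the bracket multiplying $f(Z)$ in \eqref{eq:evolutionSigmaM}. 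Likewise, taking the gradient of the transport equation for $Y$ gives $D_t[\nabla Y] = -[\nabla Y][\nabla u]$, hence $D_t([\nabla Y]^{-1}) = [\nabla u][\nabla Y]^{-1}$, and therefore $D_tB = [\nabla u]\,B + B\,[\nabla u]^T$, i.e.\ the upper-convected derivative of $B$ vanishes.

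It then remains to propagate these through $\mathcal A = B - \tfrac{(Bn)\otimes(Bn)}{(Bn)\cdot n}$ and to compute $D_tZ$. Writing $v := Bn$ and $\gamma := (Bn)\cdot n$, the terms $B[\nabla u]^T n$ cancel when $D_tB$ and $B\,D_tn$ are combined, leaving $D_tv = [\nabla u]v + s\,v$; the adjoint identity $([\nabla u]v)\cdot n = v\cdot([\nabla u]^T n)$ then gives $D_t\gamma = 2s\,\gamma$; and the $s$-terms cancel once more so that $D_t\big((v\otimes v)/\gamma\big) = [\nabla u]\,(v\otimes v)/\gamma + (v\otimes v)/\gamma\,[\nabla u]^T$. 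Subtracting, $D_t\mathcal A = [\nabla u]\mathcal A + \mathcal A[\nabla u]^T$, so $D_t(\Tr(\mathcal A)) = 2\,[\nabla u]:\mathcal A$. Finally, since $\mathcal A$ is symmetric, has $n$ in its kernel and $\det(\mathcal A)=0$, in two dimensions it is forced to equal $\Tr(\mathcal A)\,(\I - n\otimes n) = Z^2\mathcal C$; combined with $Z^2 = \Tr(\mathcal A)$ this yields $D_tZ = Z\,[\nabla u]:\mathcal C$. Substituting $D_tZ$ and $D_t\mathcal C$ into the product rule gives \eqref{eq:evolutionSigmaM}.

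The main obstacle is the bookkeeping in this last step: one must check that the non-polynomial correction $(Bn)\otimes(Bn)/((Bn)\cdot n)$ in $\mathcal A$ is transported by the same upper-convected derivative as $B$, which hinges on the successive cancellations of the $s$-terms. The reduction $\mathcal A = Z^2\mathcal C$ is where two-dimensionality is used, and it is what allows $D_tZ$ to be expressed against $\mathcal C$ rather than $\mathcal A$.
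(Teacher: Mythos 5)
Your proposal is correct and follows essentially the same route as the paper: the product rule applied to $\sigma = f(Z)\,\mathcal{C}$, the transport equation for $n$ deduced from $\partial_t(\nabla\phi) + u\cdot\nabla(\nabla\phi) = -[\nabla u]^T\nabla\phi$, and the resulting expression for $D_t\mathcal{C}$. The only difference is that the paper simply cites the evolution equation $\partial_t Z + u\cdot\nabla Z = Z\,[\nabla u]:\mathcal{C}$ from the reference on the three-dimensional model, whereas you rederive it from $D_tB = [\nabla u]B + B[\nabla u]^T$, the transport of $\mathcal{A}$, and the two-dimensional identity $\mathcal{A} = \Tr(\mathcal{A})\,\mathcal{C}$ --- a self-contained derivation whose intermediate cancellations you have carried out correctly.
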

\begin{proof}
See Appendix~\ref{app:B}.
\end{proof}

In order to develop the new method for the full model, we start by replacing the explicit term $\sigma^n$ by $\sigma^{\star}$, within the prediction step~\eqref{eq:step1},
\begin{equation}\label{eq:step1SI}
\rho^n \left(\frac{u^{\star}-u^{n}}{\Delta t} + \div(u^n\otimes u^{\star})\right) + \nabla p^n  -\div(2\mu^n D(u^{\star}) )= \delta_\eps(\phi^n) \div(\sigma^{\star}).
\end{equation}
Once one has obtained the evolution equation for $\sigma$~\eqref{eq:evolutionSigmaM}, we discretize it in a semi-implicit way for 
all the terms that depend on $u$ as, 
\begin{align}\label{eq:implicitSigmaM}
\frac{\sigma^{\star} - \sigma^n}{\Delta t} + (u^{\star}\cdot\nabla)\sigma^n =  \mathcal{T}(Z^n,\nabla u^{\star},n) + f(Z^n)\left( [\nabla u^{\star}]^T(n\otimes n) + (n\otimes n)[\nabla u^{\star}] \right)
\end{align}
where
$$ \mathcal{T}(Z^n,\nabla u^{\star},n) = \left(f'(Z^n) Z^n [\nabla u^{\star}]:\mathcal{C}\right)\mathcal{C} - 2 f(Z^n) ([\nabla u^{\star}]n\cdot n)(n\otimes n).$$

When replacing Equation~\eqref{eq:implicitSigmaM} into~\eqref{eq:step1SI} the fluid-structure momentum equation becomes
\begin{align}
&\rho^n \left(\frac{u^{\star}-u^{n}}{\Delta t} + \div(u^n\otimes u^{\star})\right) + \nabla p^n +\delta_\eps(\phi^n) \Delta t\div\bigg[ (u^{\star}\cdot\nabla)\sigma^n - \mathcal{T}(Z^n,\nabla u^{\star},n) \bigg] + \label{eq:SI} \\ 
&\div\bigg[ \biggl(\mu^n\I + \delta_\eps(\phi^n)\Delta t f(Z^n)(n\otimes n) \biggr)[\nabla u^{\star}] + [\nabla u^{\star}]^T\biggl(\mu^n\I + \delta_\eps(\phi^n)\Delta t f(Z^n)(n\otimes n) \biggr) \bigg] \nonumber\\ 
&= \delta_\eps(\phi^n)\div\left(\sigma^n \right). \nonumber
\end{align}
It should be noticed that the consistent terms coming from the evolution equation of $\sigma$ add some viscosity to the classical one.
Moreover, the considered elastic model also gives additional viscosity terms appearing in the $\mathcal{T}$ term of~\eqref{eq:SI}.
All these additional discretized terms depend on the time step $\Delta t$, the geometric features of the membrane described by $n$, and its area variation measured by $Z$.
These new viscosity terms differ significantly from $2\mu D(u)$ because $\mu$ is scalar, while the new terms, like $f(Z^n)(n\otimes n)$, are matrices. For this reason they are going to be referred to as \textit{tensorial viscosity terms}. 
\begin{remark}[Computational costs of the semi-implicit method]
It should be noticed that the resolution of the momentum equation for the explicit scheme already deals with a linear system
to treat implicitly and semi-implicitly the strain rate tensor and the advection term, respectively.  
Therefore, the additional computational cost needed to perform the semi-implicit computations
is given by adding the tensorial viscosity terms to the linear system. 
As shown in Appendix~\ref{app:C} a wider stencil is needed with respect to the classical viscous terms.
These differences do not impact the costs that much also because 
only those elements around the zero level-set require the tensorial viscosity, 
meaning when $\delta_\eps(\phi^n)\ne 0$.
\end{remark}

\section{Manufactured solution}\label{sec:manufsol}

For validation purposes in this section we analyze the convergence properties of the semi-implicit terms, 
discussed in Section~\ref{sec:semiimplicit}, discretized with standard second-order accurate formulas. 
In Appendix~\ref{app:C} we give further details on how to discretize these operators.

The following equation is solved
$$ 10 u + \div\left( (u\cdot\nabla)M - M[\nabla u] - [\nabla u]^T M - ([\nabla u]:M)M \right) = S, $$
which reduces to solving a linear system $\tilde M u_h = S$ in a square domain $[0,1]\times[0,1]$, where $S$ is computed by following the method of manufactured solutions and considering the exact solution

\begin{equation}
u = \begin{bmatrix} \sin\left(\pi x\right)\sin\left(\pi y\right) \\ \sin\left(\pi x\right)\sin\left(\pi y\right) \end{bmatrix} \qquad \text{with} \qquad M = \begin{bmatrix} 1+\sin\left(\pi x y\right) & 1+\sin\left(\pi x y\right) \\ 1+\sin\left(\pi x y\right)  & 1+\sin\left(\pi x y\right) \end{bmatrix}.	
\end{equation}

\begin{remark}[Manufactured solution]
It should be noticed that an additional term $10u$ is added to the diagonal because solving only the semi-implicit operators gives rise to an ill-conditioned linear system which is never considered in practical applications.
As a matter of fact, when solving the Navier-Stokes equation, we always have some additional terms to the diagonal given by the advection and diffusion parts of the system.
This allows us to study the convergence of the discretized operators without having conditioning issues.  
\end{remark}

In Figure~\ref{fig:validationconv} the discretization error is computed with a $L_2$ norm on a set of nested meshes: $20\times 20$, $40\times 40$, $80\times 80$, $160\times 160$, $320\times 320$.
The convergence analysis shows that the discretized operators needed to design the semi-implicit scheme are indeed second-order accurate.

\begin{figure}
\centering
 \begin{tikzpicture}
     \begin{axis}[
      xmode=log, ymode=log,
      grid=major,
      ylabel={$\|u_h - u_{ex}\|_{L_2}$},
      xlabel={$\Delta x$},
      xlabel shift = 1 pt,
      ylabel shift = 1 pt,
      legend pos=south east,
      legend style={nodes={scale=1.0, transform shape}},
      width=.7\textwidth
      ]
      \addplot[mark=square*,red]     table [y=error, x expr= 1/\thisrow{elem}]{grid_conv_validation.dat};
      \addlegendentry{Numerical results}

      \addplot[black,dashdotted,domain=1/20:1/320]{0.15*x};
      \addlegendentry{first order}
      \addplot[black,dashed,domain=1/20:1/320]{ x^2};
      \addlegendentry{second order}
      \end{axis}
 \end{tikzpicture}
\caption{Validation: convergence analysis with the discretized semi-implicit operators. $u_h$ and $u_{ex}$ represent the numerical and exact solution, respectively.}\label{fig:validationconv}
\end{figure}
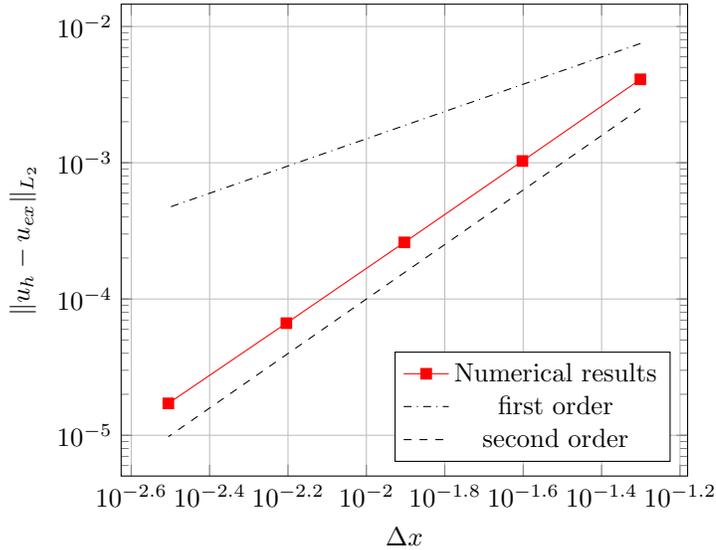

\section{Numerical experiments}\label{sec:simulations}

In this section, we study the effectiveness of the proposed scheme by comparing it to the original explicit one for a thin membrane immersed in a simple shear flow. 
The simulation is setup by considering a membrane of radius $a=0.5$ immersed within a computational domain $[-4,4]\times[-2,2]$ discretized with uniform Cartesian meshes.
From~\eqref{eq:momnondim} the dimensionless parameters are modified by taking the reference speed equal to $a \dot\gamma$, and read
\begin{equation}
\Rey = \frac{\rho a^2 \dot\gamma}{\mu} =0.1, \qquad \Cap = \frac{\mu a \dot\gamma}{K},
\end{equation}
where the capillary number $\Cap$, is modified according to the configuration chosen. 
For all cases, $\rho=\rho_1=\rho_2$ and the angular speed $\dot\gamma$ are both set equal to 1.
From the non-dimensional momentum Equation~\eqref{eq:momnondim} it is possible to notice that when 
$\Cap$ tends to zero, the elastic source term becomes stiff. This also relates to what happens when dealing with low Mach 
flows or materials with the compressible Euler equations~\cite{abbate2017all,abbate2019asymptotic,thomann2023implicit}.
For these experiments, four capillary numbers are considered: 0.02, 0.01, 0.008 and 0.001.
It should be noticed that when dealing with $\Cap=0.001$, the explicit scheme no longer provide any physical result
(when taking $\mu_2/\mu_1=1$) due to the stiff nature of the system, 
and therefore no comparison can be done with the new semi-implicit method.
Therefore, we take the viscosity ratio equal to 10 for the stiffest membrane to perform an adequate comparison, and 1 for the others.
This also shows the ability of the method to deal easily with a viscosity ratio that may differ from one, 
as it may be essential for practical applications~\cite{foessel2011influence}.  
The initial conditions are given by 
\begin{equation}
u_0(x,y) = \begin{bmatrix} \dot\gamma y \\ 0  \end{bmatrix}, \qquad
\phi_{0}(x,y) = \sqrt{x^2 + y^2 } - a,\qquad
Y_0(x,y) = \begin{bmatrix} x \\ y  \end{bmatrix},
\end{equation}
to impose the linear shear velocity field, a circular membrane, and no pre-deformation on the initial configuration.
We set Neumann boundary conditions on the left and on the right of the domain, and moving wall at the top and at the bottom.\\
%
%
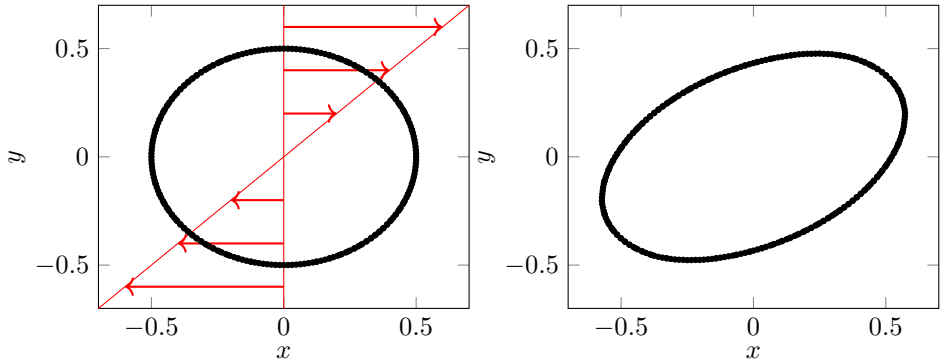
\begin{figure}
\centering
\subfloat{
\begin{tikzpicture}
  \begin{axis}[
    xlabel={$x$},
    ylabel={$y$},
    xmin=-0.7, xmax=0.7,
    ymin=-0.7, ymax=0.7,
    xlabel near ticks,
    ylabel near ticks,
    label style={inner sep=0pt}, 
    width=.50\textwidth
  ]
    \addplot[
	only marks, 
	mark size=1pt,       
	] table [x=x, y=y, col sep=comma] {ShearCa001/fine/shearCa001_init.csv};

    \draw[red](0,0.8)--(0,-0.8);
    \draw[red](0.8,0.8)--(-0.8,-0.8);
    \draw[thick,->,red](0,-0.8)--(-0.8,-0.8);
    \draw[thick,->,red](0,-0.6)--(-0.6,-0.6);
    \draw[thick,->,red](0,-0.4)--(-0.4,-0.4);
    \draw[thick,->,red](0,-0.2)--(-0.2,-0.2);
    \draw[thick,->,red](0, 0.8)--( 0.8, 0.8);
    \draw[thick,->,red](0, 0.6)--( 0.6, 0.6);
    \draw[thick,->,red](0, 0.4)--( 0.4, 0.4);
    \draw[thick,->,red](0, 0.2)--( 0.2, 0.2);

  \end{axis}
\end{tikzpicture}
}
\subfloat{
\begin{tikzpicture}
  \begin{axis}[
    xlabel={$x$},
    ylabel={$y$},
    xmin=-0.7, xmax=0.7,
    ymin=-0.7, ymax=0.7,
    xlabel near ticks,
    ylabel near ticks,
    label style={inner sep=0pt}, 
    width=.50\textwidth
  ]
    \addplot[only marks, mark size=1pt] table [x=x, y=y, col sep=comma] {ShearCa001/fine/shearCa001_final.csv};
  \end{axis}
\end{tikzpicture}
}
\caption{Membrane in simple shear flow: initial and final configurations for soft membranes.}\label{fig:shear2d}
\end{figure}
When a thin membrane is immersed into a shear flow, it starts elongating in the direction of the velocity 
vectors until it reaches a steady state position in which its shape no longer changes but its dynamics is still 
evolving. Indeed, in this configuration, the velocity vectors are perfectly tangent to the membrane and there is a constant stress applied on it, 
which makes it rotating around itself: this phenomenon is called \textit{tank-treading}~\cite{keller1982motion}. 
An overview of the initial and final position of the membrane for this experiment is shown in Figure~\ref{fig:shear2d}.
To compare properly the final shapes of the membrane, the reference time to stop the simulation is set for all cases equal to $1.5$,
which was observed to be enough reach the configuration described above.

As mentioned before, we are interested in studying the dynamics of the membrane when the new semi-implicit scheme is used,
and specifically how the additional viscosity influences the computational cost and the quality of the solution when compared to the original 
explicit scheme. In particular, we focus on the simulation of the tank-treading phenomenon by changing the stiffness coefficient 
of the structure, ranging from a soft membrane to a very stiff one, and performing a grid refinement study to analyze its influence.  

\begin{table}
        \caption{Membrane in simple shear flow: comparisons between explicit (EX) and semi-implicit (SI) schemes in terms of maximum $\Delta t$ to reach a final time $t_f=1.5$.}\label{tab_membrane1}
        \centering
	\begin{adjustbox}{width=\textwidth}
	\begin{tabular}{ccccccccc}
	\hline\hline
	&\multicolumn{2}{c}{$\Cap=0.001$} &\multicolumn{2}{c}{$\Cap=0.008$} &\multicolumn{2}{c}{$\Cap=0.01$} &\multicolumn{2}{c}{$\Cap=0.02$}\Tstrut\Bstrut\\[0.5mm]
	\cline{2-9}
	$N_x \times N_y$ & EX             & SI                &   EX          & SI             & EX             & SI              &  EX            &  SI              \Tstrut\Bstrut\\[0.5mm]\hline
	$128\times 64 $  & $6.0\;10^{-3}$ &  $6.0\;10^{-2}$   & $2.5\;10^{-2}$& $1.0\;10^{-1}$ & $3.0\;10^{-2}$ &  $1.0\;10^{-1}$ & $5.0\;10^{-2}$ & $1.5\;10^{-1}$   \Tstrut\Bstrut\\
	$256\times 128$  & $1.0\;10^{-3}$ &  $2.5\;10^{-2}$   & $1.0\;10^{-2}$& $8.5\;10^{-2}$ & $1.5\;10^{-2}$ &  $9.5\;10^{-2}$ & $2.5\;10^{-2}$ & $1.0\;10^{-1}$   \Tstrut\Bstrut\\
	$512\times 256$  & $2.0\;10^{-4}$ &  $5.0\;10^{-3}$   & $6.0\;10^{-3}$& $6.0\;10^{-2}$ & $5.0\;10^{-3}$ &  $5.0\;10^{-2}$ & $1.0\;10^{-2}$ & $7.5\;10^{-2}$   \Tstrut\Bstrut\\
        \hline\hline
        \end{tabular}
	\end{adjustbox}
\end{table}
In Table~\ref{tab_membrane1} we present the $\Delta t$ needed to perform several experiments
with varying capillary number, from the softest to the stiffest, on a set of three nested uniform meshes $128\times 64 $, $256\times 128$, and $512\times 256$.
It can be noticed that in all cases the semi-implicit method provides higher maximum time steps, proving also numerically speaking that the additional dissipation
introduces consistent viscosity where it is needed allowing us to use larger time steps.
Moreover, the new method provides larger time steps when stiffer structures are under study, which is extremely promising especially for real applications. 
We believe that this numerical behavior can be explained by considering that when dealing with stiffer structure the elastic waves seem  
to become less relevant to capture the general dynamics of the problem. 
This entails larger time steps for stiff structures that do not deform that much, for which the CFL condition given by the elastic waves is very strict.  
Instead, since the deformations are more important for softer membranes, smaller time step ratios seem to be crucial to have good predictions. 

\begin{table}
        \caption{Membrane in simple shear flow: time step ratio $\Delta t_{SI}/\Delta t_{EX}$ with data taken from Table~\ref{tab_membrane1}.}\label{tab_membrane2}
        \centering
	\footnotesize
	\begin{tabular}{ccccc}
	\hline\hline
	                 &$\Cap=0.001$     &$\Cap=0.008$       &$\Cap=0.01$     &$\Cap=0.02$     \Tstrut\Bstrut\\[0.5mm]
	\cline{2-5}
	$N_x \times N_y$      &\multicolumn{4}{c}{$\Delta t_{SI}/\Delta t_{EX}$}                  \Tstrut\Bstrut\\[0.5mm]\hline
	$128\times 64 $  &      10         &        4          &     3.33       &    3            \Tstrut\Bstrut\\
	$256\times 128$  &      25         &        8.5        &     6.33       &    4            \Tstrut\Bstrut\\
	$512\times 256$  &      25         &        10         &     10         &    7.5          \Tstrut\Bstrut\\
        \hline\hline
        \end{tabular}
\end{table}

The mesh refinement analysis also provides interesting results regarding the effect of the new method when refining the mesh.
In particular, it is observed that a larger gain in terms of speedup is experienced on finer grids with respect to coarser grids, 
which is very promising also in view of the future development of the method for three-dimensional models.
Table~\ref{tab_membrane2} shows the ratio between the semi-implicit time step $\Delta t_{SI}$ and the explicit time step $\Delta t_{EX}$, showing its observed increasing trend when using finer grids and stiffer membranes. 
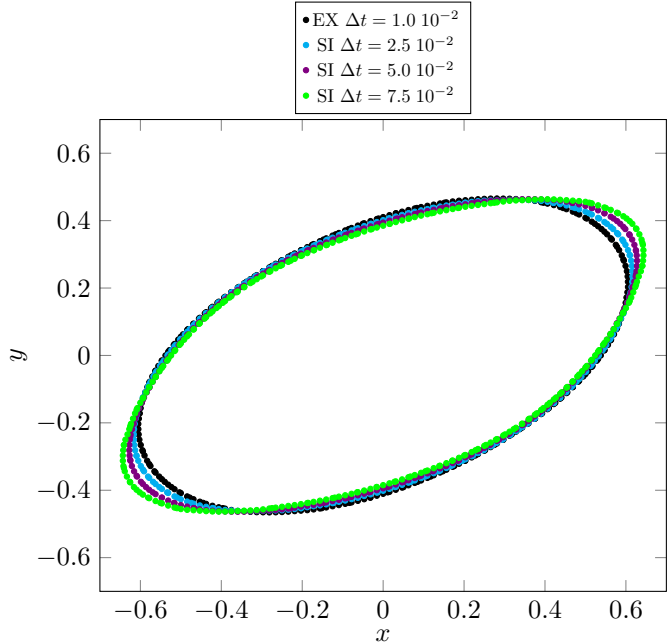
\begin{figure}
\centering
\subfloat{
\begin{tikzpicture}
  \begin{axis}[
    xlabel={$x$},
    ylabel={$y$},
    xmin=-0.7, xmax=0.7,
    ymin=-0.7, ymax=0.7,
    xlabel near ticks,
    ylabel near ticks,
    label style={inner sep=0pt}, 
    width=.70\textwidth,
    legend style={nodes={scale=0.7, transform shape},anchor=north,at={(0.5,1.25)}}
  ]
    \addplot[only marks, mark size=1pt] table [x=x, y=y, col sep=comma]         {ShearCa002/fine/shearCa002_final.csv};
    \addlegendentry{EX $\Delta t=1.0\;10^{-2}$}
    \addplot[only marks, mark size=1pt, cyan] table [x=x, y=y, col sep=comma]   {ShearCa002/fine/shearCa002_SI1.csv};
    \addlegendentry{SI $\Delta t=2.5\;10^{-2}$}
    \addplot[only marks, mark size=1pt, violet] table [x=x, y=y, col sep=comma] {ShearCa002/fine/shearCa002_SI2.csv};
    \addlegendentry{SI $\Delta t=5.0\;10^{-2}$}
    \addplot[only marks, mark size=1pt, green] table [x=x, y=y, col sep=comma]  {ShearCa002/fine/shearCa002_SI3.csv};
    \addlegendentry{SI $\Delta t=7.5\;10^{-2}$}
  \end{axis}
\end{tikzpicture}
}
\caption{Membrane in simple shear flow: final configuration on the finest mesh for $\Cap=0.02$ with both explicit (EX) and semi-implicit (SI) methods. For the solution computed with the explicit method, the maximum $\Delta t$ shown in Table~\ref{tab_membrane1} is taken. 
For the semi-implicit method, a set of three simulations have been performed with $\Delta t$ ranging from the explicit bound to semi-implicit one shown in Table~\ref{tab_membrane1}.}\label{fig:shear2d_Ca2e-2}
\end{figure}

The increased dissipation experienced for softer structures can also be observed when studying the membrane final shape once the final time is reached.
In the following, we discuss the effect of the numerical scheme on the physics of the problem, which is extremely important to understand if the 
maximum admissible time steps given in Table~\ref{tab_membrane1} are enough to provide a good approximated solution.
For this reason, we perform a set of numerical simulations and compare the final shape obtained through the original explicit scheme with those
simulated with the semi-implicit scheme. Moreover, to investigate further the impact of the time step on the diffusion we compare the explicit result
with a set of three simulations, for each problem, for which the time step has been adapted ranging from the explicit upper bound to the semi-implicit one.
For all cases, the semi-implicit simulations are run with larger time steps with respect to the explicit one.
For instance, in Figure~\ref{fig:shear2d_Ca2e-2} where we show the numerical results for the softest membrane $\Cap=0.02$ on the finest grid, we compare the zero level-set 
of the membrane obtained with the explicit method with $\Delta t=1.0\;10^{-2}$ to the same curve given by the semi-implicit scheme run with $\Delta t=2.5\;10^{-2},\, 5.0\;10^{-2},\, 7.5\;10^{-2}$.
For all experiments we perform the same study and show the results on the finest grid. 
As expected since the proposed scheme is consistent we obtain that all shapes follow the trace of the explicit one.
For the very soft membrane shown in Figure~\ref{fig:shear2d_Ca2e-2} we experience a greater difference in the local error with respect to the explicit solution which increases for larger time steps. 
It is observed that, when deformations are important, damping the elastic waves with the tensorial viscosity terms comes with less accurate solutions. 
Moreover, we conclude that for soft structures CFL restrictions are not that important, and therefore the use of more sophisticated methods is not needed. 

\begin{figure}
\centering
\subfloat[$\Cap=0.01$]{
\begin{tikzpicture}
  \begin{axis}[
    xlabel={$x$},
    ylabel={$y$},
    xmin=-0.7, xmax=0.7,
    ymin=-0.7, ymax=0.7,
    xlabel near ticks,
    ylabel near ticks,
    label style={inner sep=0pt}, 
    width=.50\textwidth,
    legend style={nodes={scale=0.7, transform shape},anchor=north,at={(0.5,1.40)}}
  ]
    \addplot[only marks, mark size=1pt] table [x=x, y=y, col sep=comma]         {ShearCa001/fine/shearCa001_final.csv};
    \addlegendentry{EX $\Delta t=5.0\;10^{-3}$}
    \addplot[only marks, mark size=1pt, cyan] table [x=x, y=y, col sep=comma]   {ShearCa001/fine/shearCa001_SI1.csv};
    \addlegendentry{SI $\Delta t=8.0\;10^{-3}$}
    \addplot[only marks, mark size=1pt, violet] table [x=x, y=y, col sep=comma] {ShearCa001/fine/shearCa001_SI2.csv};
    \addlegendentry{SI $\Delta t=2.0\;10^{-2}$}
    \addplot[only marks, mark size=1pt, green] table [x=x, y=y, col sep=comma]  {ShearCa001/fine/shearCa001_SI3.csv};
    \addlegendentry{SI $\Delta t=5.0\;10^{-2}$}
  \end{axis}
\end{tikzpicture}
}
\subfloat[$\Cap=0.008$]{
\begin{tikzpicture}
  \begin{axis}[
    xlabel={$x$},
    ylabel={$y$},
    xmin=-0.7, xmax=0.7,
    ymin=-0.7, ymax=0.7,
    xlabel near ticks,
    ylabel near ticks,
    label style={inner sep=0pt}, 
    width=.50\textwidth,
    legend style={nodes={scale=0.7, transform shape},anchor=north,at={(0.5,1.40)}}
  ]
    \addplot[only marks, mark size=1pt] table [x=x, y=y, col sep=comma]         {ShearCa0008/fine/shearCa0008_final.csv};
    \addlegendentry{EX $\Delta t=6.0\;10^{-3}$}
    \addplot[only marks, mark size=1pt, cyan] table [x=x, y=y, col sep=comma]   {ShearCa0008/fine/shearCa0008_SI1.csv};
    \addlegendentry{SI $\Delta t=9.0\;10^{-3}$}
    \addplot[only marks, mark size=1pt, violet] table [x=x, y=y, col sep=comma] {ShearCa0008/fine/shearCa0008_SI2.csv};
    \addlegendentry{SI $\Delta t=3.0\;10^{-2}$}
    \addplot[only marks, mark size=1pt, green] table [x=x, y=y, col sep=comma]  {ShearCa0008/fine/shearCa0008_SI3.csv};
    \addlegendentry{SI $\Delta t=6.0\;10^{-2}$}
  \end{axis}
\end{tikzpicture}
}
\caption{Membrane in simple shear flow: final configuration on the fine mesh for $\Cap=0.01,\;0.008$ with both explicit (EX) and semi-implicit (SI) methods. For the solution computed with the explicit method, the maximum $\Delta t$ shown in Table~\ref{tab_membrane1} is taken. 
For the semi-implicit method, a set of three simulations have been performed with $\Delta t$ ranging from the explicit bound to semi-implicit one shown in Table~\ref{tab_membrane1}.}\label{fig:shear2d_interm}
\end{figure}
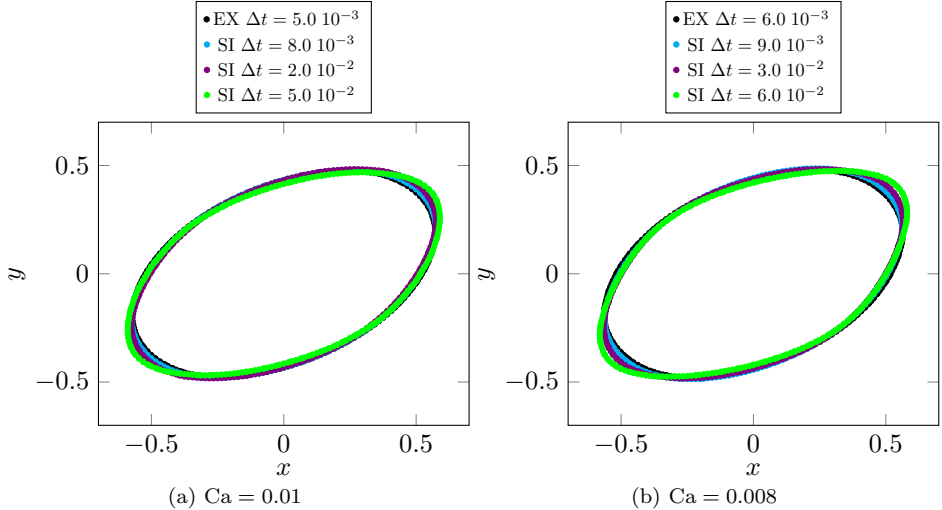
Indeed, it is well-known that stricter restrictions occur for stiffer structures, and it is for those applications that an implicit or semi-implicit approach is crucial.
In Figure~\ref{fig:shear2d_interm}, two intermediate test cases are considered with lower capillary numbers to study the effect of the new scheme in these configurations.
For simplicity, the experiments performed with $\Cap=0.01$ and $\Cap=0.008$ will be discussed together due to similarities in the final shapes, but also 
for the maximum $\Delta t$ and gains shown in Tables~\ref{tab_membrane1} and~\ref{tab_membrane2}. 
For both cases, the semi-implicit scheme introduces much less dissipation giving a final shape of the membrane that is much closer to the reference one with a higher speedup $\Delta t_{SI} /\Delta t_{EX}$, 
especially when compared to the results observed in Figure~\ref{fig:shear2d_Ca2e-2}. 
In particular, in this case a speedup of 10 is observed on the finest mesh. 
\begin{figure}
\centering
\subfloat{
\begin{tikzpicture}
  \begin{axis}[
    xlabel={$x$},
    ylabel={$y$},
    xmin=-0.7, xmax=0.7,
    ymin=-0.7, ymax=0.7,
    xlabel near ticks,
    ylabel near ticks,
    label style={inner sep=0pt}, 
    width=.70\textwidth,
    legend style={nodes={scale=0.7, transform shape},anchor=north,at={(0.5,1.25)}}
  ]
    \addplot[only marks, mark size=1pt] table [x=x, y=y, col sep=comma]         {ShearCa0001/fine/shearCa0001_final.csv};
    \addlegendentry{EX $\Delta t=2.0\;10^{-4}$}
    \addplot[only marks, mark size=1pt, cyan] table [x=x, y=y, col sep=comma]   {ShearCa0001/fine/shearCa0001_SI1.csv};
    \addlegendentry{SI $\Delta t=8.0\;10^{-4}$}
    \addplot[only marks, mark size=1pt, violet] table [x=x, y=y, col sep=comma] {ShearCa0001/fine/shearCa0001_SI2.csv};
    \addlegendentry{SI $\Delta t=2.0\;10^{-3}$}
    \addplot[only marks, mark size=1pt, green] table [x=x, y=y, col sep=comma]  {ShearCa0001/fine/shearCa0001_SI3.csv};
    \addlegendentry{SI $\Delta t=5.0\;10^{-3}$}
  \end{axis}
\end{tikzpicture}
}
\caption{Membrane in simple shear flow: final configuration on the fine mesh for $\Cap=0.001$ with both explicit (EX) and semi-implicit (SI) methods. For the solution computed with the explicit method, the maximum $\Delta t$ shown in Table~\ref{tab_membrane1} is taken. 
For the semi-implicit method, a set of three simulations have been performed with $\Delta t$ ranging from the explicit bound to semi-implicit one shown in Table~\ref{tab_membrane1}.}\label{fig:shear2d_Ca1e-3}
\end{figure}
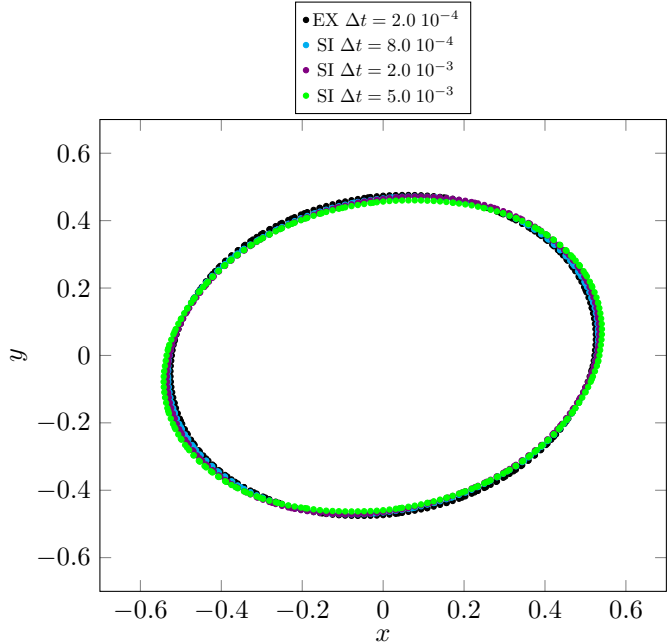
As expected from previous results and assumptions, the best results are obtained for the stiffest membrane simulated with $\Cap=0.001$. 
Figure~\ref{fig:shear2d_Ca1e-3} shows that all semi-implicit results with higher time step are almost superimposed on the reference shape computed with $\Delta t=2.0\;10^{-4}$.
In particular, for this experiment, we observe a speedup of 10 on the coarse mesh and 25 on the fine one that translates into a dramatic reduction of the computational time
which also comes with a very precise prediction of the membrane shape.
Although elastic waves are extremely fast for this experiment, the additional viscosity seem to be able to dampen them and relax the time step restriction,
still providing a good dynamics of the membrane with almost no additional costs.

We conclude by pointing out that the proposed approach behaves the best for the stiffest structure on the finest mesh.
This is critical for both real applications that often concern stiff materials and computational efficiency, which one would like to improve especially when refining the mesh.

\section{Summary and future perspectives}\label{sec:conclusions}

We presented a novel approach for the construction of semi-implicit schemes for Eulerian models for fluid-structure interaction.
This approach is very interesting for the simulation of incompressible flows in which the speed of elastic waves is much higher than that of the fluid
causing the need of using extremely low time steps when compared to the simulation time.
The focus of this work was on hyperelastic membranes and on the two-dimensional version of the model for full membrane elasticity introduced in~\cite{Milcent2016}. 
The tensorial viscosity terms provide additional but consistent dissipation that activates only on the
few cells where the membrane interface is spread. 
This was also demonstrated for a simplified one-dimensional model in Section~\ref{sec:stability}, where this additional term allowed us to prove that the scheme is unconditionally stable.
As shown in Section~\ref{sec:schemes}, in the full nonlinear fluid-structure system these consistent terms appear in the momentum equation~\eqref{eq:SI} and depend on the time step, the stiffness parameter, and the local deformation of the membrane. 
In Section~\ref{sec:simulations} a set of numerical experiments is performed for a membrane immersed in a simple shear flow. 
It was shown that the dependences mentioned before influence the performances of the new scheme, which is observed to be more efficient and accurate for stiffer structures and finer grids. 
In particular, the numerical method proposed herein performed best on the experiment with the lowest capillary number showing a remarkable speedup
of 10 on the coarse mesh and 25 on the fine one while preserving a very good agreement with the reference solution.

The perspectives of this work are several and range from the development of the method for other hyperelastic models to its application to advance simulations of biological systems.
 
Regarding the investigations on the method, in this work we considered 
the two-dimensional version of the full membrane model introduced in~\cite{Milcent2016}, 
and future studies will also focus on the development of the method for the full three-dimensional model. 
Following the same reasoning, new evolution equations will be written to take into account the modeling of shear variation. 
Therefore, this will introduce viscosity terms that depend on both the area and shear variation.
Other models will also be considered to study the effect of this approach on surface tension (special case of the membrane model with area variation) and, 
anisotropic elastic bodies~\cite{cottet2008eulerian}.   

About the possible applications, the proposed method is very promising for the simulation of biological capsules and red blood cells (RBCs), 
modelled as thin membranes characterized by shear effects, and their interaction with complex geometries~\cite{Hu2013,haner2021sorting}.

\appendix

\section{$Z$ is the local area variation}\label{app:A}

In this appendix, we prove that $Z$ introduced in~\eqref{Z1_Z2b} measures the local area variation as 
it was firstly introduced in~\cite{cottet2006level}. In particular, in~\cite{cottet2006level} it was proven that 
for incompressible media the gradient of the level-set $|\nabla\phi|$ transports information about the local area variation. 
Afterwards, in~\cite{Milcent2016} a new theory was introduced with the goal of having a single formulation 
(for both compressible and incompressible media) to deal with both area and shear variations. 
For this reason, two parameters have been introduced for three-dimensional membranes as functions only of the backward characteristics $Y$.

Here, we prove that to capture the area variation for two-dimensional membranes a different parameter is needed with
respect to~\cite{Milcent2016}, and we also show how to relate it to the theory introduced in~\cite{cottet2006level}. 

\begin{proposition}

Given that $J=\det(\nabla_\xi X) = \det(\nabla Y)^{-1} $ is the volume ratio, different than one for general compressible media, 
\begin{equation}\label{appeq:Z}
Z = J \frac{|\nabla\phi|}{|\nabla\phi_{0}(Y)|}. 
\end{equation}
\end{proposition}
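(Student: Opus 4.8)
The plan is to exploit the fact that the level set is transported along characteristics, so that $\phi$ at time $t$ is the pull-back of the initial level set by $Y$. First I would note that, combining the advection equation~\eqref{advphi} with the forward characteristics~\eqref{carac_lag} and the identity $Y(X(t,\xi),t)=\xi$, one gets $\phi(x,t)=\phi_{0}(Y(x,t))$; differentiating in $x$ by the chain rule yields $\nabla\phi=[\nabla Y]^{T}(\nabla\phi_{0})(Y)$, equivalently $(\nabla\phi_{0})(Y)=[\nabla Y]^{-T}\nabla\phi$. Setting $F:=\nabla_{\xi}X=[\nabla Y]^{-1}$ by~\eqref{grad_XY}, we then have $B=FF^{T}$ and $\det F=J$.

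Next I would reduce the trace in~\eqref{Z1_Z2b} to a single term. Since $\mathcal{A}n=0$ (immediate from~\eqref{AB}) and $\mathcal{A}$ is symmetric, in two dimensions $n$ spans the kernel, so $\Tr(\mathcal{A})=\mathcal{A}\tau\cdot\tau$ for the unit tangent $\tau$ with $\tau\cdot n=0$. Using $B=FF^{T}$ and expanding,
$$\mathcal{A}\tau\cdot\tau=B\tau\cdot\tau-\frac{(Bn\cdot\tau)^{2}}{Bn\cdot n}=\frac{|F^{T}n|^{2}\,|F^{T}\tau|^{2}-(F^{T}n\cdot F^{T}\tau)^{2}}{|F^{T}n|^{2}}.$$

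Then I would apply the two-dimensional Lagrange (Gram) identity with $a=F^{T}n$ and $b=F^{T}\tau$: the numerator equals $(\det[a\,|\,b])^{2}$, and since $[a\,|\,b]=F^{T}[n\,|\,\tau]$ with $[n\,|\,\tau]$ orthogonal, this is $(\det F)^{2}=J^{2}$. For the denominator, $F^{T}n=[\nabla Y]^{-T}n=|\nabla\phi|^{-1}[\nabla Y]^{-T}\nabla\phi=|\nabla\phi|^{-1}(\nabla\phi_{0})(Y)$ by the identity derived in the first step, so $|F^{T}n|^{2}=|\nabla\phi_{0}(Y)|^{2}/|\nabla\phi|^{2}$. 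Combining, $\Tr(\mathcal{A})=J^{2}|\nabla\phi|^{2}/|\nabla\phi_{0}(Y)|^{2}$, and taking the positive square root (using $J>0$) gives~\eqref{appeq:Z}.

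The routine but delicate part is keeping the transpose conventions straight — that differentiating $\phi_{0}\circ Y$ brings in $[\nabla Y]^{T}$ while $\nabla_{\xi}X=[\nabla Y]^{-1}$ — together with the observation that reducing $\Tr(\mathcal{A})$ to its single nonzero eigenvalue is exactly what lets the Lagrange identity collapse the numerator to $J^{2}$; once these are in place the computation is only a couple of lines.
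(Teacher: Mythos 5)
Your proof is correct, and it reaches the result by a slightly different algebraic route than the paper. The paper evaluates $\Tr(\mathcal{A})$ by writing it as $\frac{((\Tr(B)B-B^{2})n)\cdot n}{(Bn)\cdot n}$ and invoking the two-dimensional Cayley--Hamilton identity $B^{2}-\Tr(B)B+\det(B)\I=0$ to collapse the numerator to $\det(B)$; you instead use the kernel structure of $\mathcal{A}$ (namely $\mathcal{A}n=0$, so the trace reduces to the single tangential component $\mathcal{A}\tau\cdot\tau$) and then the Lagrange/Gram identity applied to $F^{T}n$ and $F^{T}\tau$ to identify the numerator as $(\det F)^{2}=J^{2}$. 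The two arguments are essentially dual ways of exploiting two-dimensionality — both land on $\Tr(\mathcal{A})=\det(B)/((Bn)\cdot n)$ with $\det(B)=J^{2}$ — and the remaining step, $(Bn)\cdot n=|F^{T}n|^{2}=|\nabla\phi_{0}(Y)|^{2}/|\nabla\phi|^{2}$ via $\nabla\phi=[\nabla Y]^{T}\nabla\phi_{0}(Y)$, is identical to the paper's \eqref{appeq:Bnn}. Your version has the small merit of making the geometric content explicit (the nonzero eigenvalue of $\mathcal{A}$ is the squared tangential stretch) and of actually deriving the pull-back relation $\phi=\phi_{0}\circ Y$ from the transport equation rather than asserting it; the paper's Cayley--Hamilton route is marginally shorter and avoids introducing $\tau$. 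No gaps.
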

\begin{proof}
The local area variation defined in~\eqref{Z1_Z2b} is measured by 
\begin{equation}\label{appeq:Zeq}
Z = \sqrt{\text{Tr}(\mathcal{A})}. 
\end{equation}
When using definition~\eqref{AB}, we can write

\begin{align}\label{appeq:TrA}
\text{Tr}(\mathcal{A}) &= \text{Tr}(B) - \frac{\text{Tr}((Bn)\otimes(Bn))}{(Bn)\cdot n} 
                       = \frac{((\text{Tr}(B)B - B^2)n)\cdot n}{(Bn)\cdot n} 
                       = \frac{\det(B)}{(Bn)\cdot n}, 
\end{align}

where, for $B \in \mathcal M_2(\R)$, the Cayley-Hamilton theorem $B^2-\text{Tr}(B)B+\text{det}(B)\I=0$ was used in the last step.\\
With Equation~\eqref{normales}, and $\nabla\phi=[\nabla Y]^T \nabla\phi_{0}(Y)$, we can write

\begin{align}\label{appeq:Bnn}
 (Bn)\cdot n &= \left([\nabla Y]^{-1} [\nabla Y]^{-T} \frac{\nabla\phi}{|\nabla\phi|} \right)\cdot \frac{\nabla\phi}{|\nabla\phi|} 
 	     = \left| [\nabla Y]^{-T} \frac{\nabla\phi}{|\nabla\phi|} \right|^2 
	     = \frac{|\nabla\phi_{0}(Y)|^2}{|\nabla\phi|^2}. 
\end{align}
With the relation $\det(B)=J^2$ and \eqref{appeq:Bnn}, Equation~\eqref{appeq:Zeq} reduces to~\eqref{appeq:Z}.
\end{proof}

\begin{remark}[Volume ratio]
It should be noticed that Equation~\eqref{appeq:Zeq} measures the area variation for both compressible and incompressible flows.
In particular, for incompressible flows, $J$ is exactly equal to one and $Z = |\nabla\phi|/|\nabla\phi_{0}(Y)|$ as it was introduced in~\cite{cottet2006level}. 
\end{remark}

\section{Proof of Proposition~\ref{prop:evolstress}}\label{app:B}

\begin{proof*}
By performing the derivative in time of $\sigma$ we obtain
\begin{equation}\label{eq:dersigma}
\pa_t \sigma = \pa_t f(Z) \,\mathcal{C} + f(Z) \pa_t\mathcal{C}.
\end{equation}
We first focus on the equation related to $f(Z)$. Considering the time derivative of $f(Z)$ gives us
\begin{equation}\label{eq:evolutionf}
\pa_t f(Z) + u\cdot\nabla f(Z) = f'(Z) Z\, [\nabla u]:\mathcal{C},
\end{equation}
where the last step is obtained with the equation on $Z$ taken from~\cite{Milcent2016}, recalled here
\begin{equation}\label{eq:evolutionZ}
\pa_t Z + u\cdot\nabla Z = Z\, [\nabla u]:\mathcal{C}.
\end{equation}
To compute the equation on $\mathcal C$, an equation on $n$ is needed. The evolution of $n$ is computed by deriving Equation~\eqref{normales} as follows
\begin{equation*}
\pa_t n = \frac{1}{|\nabla\phi|} \left[ \pa_t(\nabla\phi) - \left( \pa_t(\nabla\phi)\cdot\frac{\nabla\phi}{|\nabla\phi|}\right)\frac{\nabla\phi}{|\nabla\phi|} \right]. 
\end{equation*}
This relation together with the evolution equation on the gradient of $\phi$~\eqref{AB} 
\begin{equation*}
\pa_t (\nabla\phi) + u\cdot\nabla(\nabla\phi) = -[\nabla u]^T\nabla\phi,
\end{equation*}
gives us
\begin{equation}\label{eq:evolutionN}
\pa_t n + u\cdot\nabla n = -[\nabla u]^T n + (([\nabla u]n)\cdot n)n.
\end{equation}
With the identities $(A b)\otimes c = A (b\otimes c)$ and $b\otimes (Ac) = (b \otimes c)A^T$, it is possible to write the equation on $\mathcal C$, which reads
\begin{equation}\label{eq:evolutionC}
\pa_t \mathcal C + u\cdot\nabla \mathcal C = [\nabla u]^T (n\otimes n) + (n\otimes n) [\nabla u] -2 (([\nabla u]n)\cdot n)(n\otimes n).
\end{equation}
Using \eqref{eq:evolutionC} and \eqref{eq:evolutionf} in~\eqref{eq:dersigma} we get the evolution equation~\eqref{eq:evolutionSigmaM} stated in the proposition. 
\end{proof*}

\section{Discretization of semi-implicit operators}\label{app:C}

In this appendix, we briefly describe how we discretize the semi-implicit operators for the staggered mesh framework used by the incompressible Navier-Stokes solver.
In particular, for a two-dimensional problem, up to 16 terms need to be discretized: 
$\partial_{i} (m \, \partial_{j} u_k)$, with $i,j,k=x,y$, for both $u_x$ and $u_y$ located on vertical and horizontal faces, respectively (see Figure \ref{fig:staggered}). 
Indeed, since the position of the $x-$ and $y-$components of a vector field are located in different points the discretization of
the same operator on one direction is going to be different to that on the other direction.
For compactness, here we do not present the discretization of all terms but, we mainly focus on few of them to give an idea on how to discretize these operators.  
There are of course several ways to discretize such terms but as long as they are consistent no remarkable difference is observed.  
For simplicity a uniform discretization is considered, meaning that each cell has a $\Delta x \times \Delta y$ area. 
To simplify the notation, only in this appendix we take the following abuse of notation: $u=u_x$ and $v=u_y$.\\
 
For both components, we discretize two trivial operators and a third trickier one.
For the $x-$direction (on vertical faces) we consider the discretization of $\partial_{x} (m \, \partial_{x} u)$, $\partial_{y} (m \, \partial_{y} u)$, and $\partial_{y} (m \, \partial_{y} v)$:
{\small
\begin{align*}
\partial_x (m\, \partial_x u)|_{i-1/2,j} &= \frac1{\Delta x} \left( m_{i,j}           \frac{ u_{i+1/2,j}     - u_{i-1/2,j} }{\Delta x} - m_{i-1,j}         \frac{ u_{i-1/2,j}   - u_{i-3/2,j}   }{\Delta x} \right),  \\ 
\partial_y (m\, \partial_y u)|_{i-1/2,j} &= \frac1{\Delta y} \left( m^*_{i-1/2,j+1/2} \frac{ u_{i-1/2,j+1}   - u_{i-1/2,j} }{\Delta y} - m^*_{i-1/2,j-1/2} \frac{ u_{i-1/2,j}   - u_{i-1/2,j-1} }{\Delta y}  \right), \\ 
\partial_y (m\, \partial_y v)|_{i-1/2,j} &= \frac1{\Delta y} \left( m^*_{i-1/2,j+1/2} \frac{ v^*_{i-1/2,j+1} - v^*_{i-1/2,j} }{\Delta y} - m^*_{i-1/2,j-1/2} \frac{ v^*_{i-1/2,j} - v^*_{i-1/2,j-1} }{\Delta y} \right),
\end{align*}
}
where $(\cdot)^*$ identifies an interpolated variable. In this case, we can approximate the multiplicative factor $m$ as
\begin{align*}
& m^*_{i-1/2,j+1/2} = \frac14(m_{i,j}+m_{i,j+1}+m_{i-1,j+1}+m_{i-1,j}),\\
& m^*_{i-1/2,j-1/2} = \frac14(m_{i,j-1}+m_{i,j}+m_{i-1,j}+m_{i-1,j-1}), 
\end{align*}
and the velocity component $v$ as
\begin{align*}
& v^*_{i-1/2,j+1} = \frac14(v_{i,j+1/2}+v_{i,j+3/2}+v_{i-1,j+3/2}+v_{i-1,j+1/2}), \\
& v^*_{i-1/2,j} = \frac14(v_{i,j-1/2}+v_{i,j+1/2}+v_{i-1,j+1/2}+v_{i-1,j-1/2}),\\
& v^*_{i-1/2,j-1} = \frac14(v_{i,j-3/2}+v_{i,j-1/2}+v_{i-1,j-1/2}+v_{i-1,j-3/2}).
\end{align*}
For the $y-$direction (on horizontal faces) we consider the discretization of $\partial_{y} (m \, \partial_{y} v)$, $\partial_{x} (m \, \partial_{x} v)$,  and $\partial_{x} (m \, \partial_{x} u)$:
\begin{align*}
\partial_y (m\, \partial_y v)|_{i,j-1/2} &= \frac1{\Delta y} \left( m_{i,j}           \frac{v_{i,j+1/2}  -v_{i,j-1/2}}{\Delta y} - m_{i,j-1}         \frac{v_{i,j-1/2}-v_{i,j-3/2  }}{\Delta y} \right), \\
\partial_x (m\, \partial_x v)|_{i,j-1/2} &= \frac1{\Delta x} \left( m^*_{i+1/2,j-1/2} \frac{v_{i+1,j-1/2}-v_{i,j-1/2}}{\Delta x} - m^*_{i-1/2,j-1/2} \frac{v_{i,j-1/2}-v_{i-1,j-1/2}}{\Delta x} \right)   , \\ 
\partial_x (m\, \partial_x u)|_{i,j-1/2} &= \frac1{\Delta x} \left( m^*_{i+1/2,j-1/2} \frac{ u^*_{i+1,j-1/2} - u^*_{i,j-1/2} }{\Delta x} - m^*_{i-1/2,j-1/2} \frac{ u^*_{i,j-1/2} - u^*_{i-1,j-1/2} }{\Delta x} \right),
\end{align*}
where we can approximate the remaining $m$ term as
$$ m^*_{i+1/2,j-1/2} = \frac14(m_{i+1,j-1}+m_{i+1,j}+m_{i,j}+m_{i,j-1}) , $$
and the velocity component $u$ as
\begin{align*}
&u^*_{i+1,j-1/2} = \frac14(u_{i+3/2,j-1}+u_{i+3/2,j}+u_{i+1/2,j}+u_{i+1/2,j-1}), \\ 
&u^*_{i,j-1/2} = \frac14(u_{i+1/2,j-1}+u_{i+1/2,j}+u_{i-1/2,j}+u_{i-1/2,j-1}), \\
& u^*_{i-1,j-1/2} = \frac14(u_{i-1/2,j-1}+u_{i-1/2,j}+u_{i-3/2,j}+u_{i-3/2,j-1}).
\end{align*}

\section*{Acknowledgments}

MC would like to acknowledge the support of Antoine Lemoine for the development of this project within the open-source 
massively parallel software {\bf notus CFD} (\url{https://notus-cfd.org/}).

\bibliographystyle{siamplain}
\bibliography{bib}
\end{document}